\newtheorem{theorem}{Theorem}
\newtheorem{lemma}{Lemma}
\newtheorem{corollary}{Corollary}
\newtheorem{proposition}{Proposition}
\newcommand{\R}{\mathbb{R}}
\newcommand{\N}{\mathbb{N}}
\renewcommand\({\left(}
\renewcommand\){\right)}
\renewcommand\[{\left[}
\renewcommand\]{\right]}
\newcommand\la{\lambda}
\newcommand{\be}{\begin{equation}}
\newcommand{\ee}{\end{equation}}
\newcommand{\ba}{\begin{array}}
\newcommand{\ea}{\end{array}}
\newcommand{\bea}{\begin{eqnarray*}}
\newcommand{\eea}{\end{eqnarray*}}
\newcommand{\bean}{\begin{eqnarray}}
\newcommand{\eean}{\end{eqnarray}}
\begin{document}

\makeatletter \@addtoreset{equation}{section}
\renewcommand{\theequation}{\thesection.\arabic{equation}}
\makeatother


\renewcommand{\baselinestretch}{1}
\renewcommand{\theequation}{\arabic{section}.\arabic{equation}}
\renewcommand{\thetheorem}{\arabic{section}.\arabic{theorem}}
\renewcommand{\thelemma}{\arabic{section}.\arabic{lemma}}
\renewcommand{\theproposition}{\arabic{section}.\arabic{proposition}}
\renewcommand{\thedefinition}{\arabic{section}.\arabic{definition}}
\renewcommand{\thecorollary}{\arabic{section}.\arabic{corollary}}
\renewcommand{\theremark}{\arabic{section}.\arabic{remark}}
\pagestyle{empty}

\pagestyle{myheadings}

\title{Positive and negative exact boundary controllability results for the linear Biharmonic Schr\"odinger equation}

\author{Ka\"{\i}s Ammari
\thanks{LR Analysis and Control of PDEs,  LR22ES03, Department of Mathematics, Faculty of Sciences of
Monastir, University of Monastir, 5019 Monastir, Tunisia,
\url{kais.ammari@fsm.rnu.tn}.}~~~~~~
Hedi Bouzidi\thanks{LR Analysis and Control of PDEs,  LR22ES03, Department of Mathematics, Faculty of Sciences of
Monastir, University of Monastir, 5019 Monastir, Tunisia,
\url{hedi.bouzidi@fst.utm.tn}.}}

\date{}

 \maketitle

\begin{abstract} In this paper, we study the exact boundary controllability of the linear Biharmonic Schr\"odinger equation $i\partial_ty=-\partial_x^4y+
\gamma\partial_x^2y$ on a bounded domain with hinged boundary conditions and boundary control acts on the second spatial derivative at the {left} endpoint, where the parameter $\gamma<0$. We prove that this system is exactly controllable in time $T>0$, if and only if, the parameter $\gamma$  does not belong to a critical countable set of negative real numbers.  The analysis in this work is based on spectral analysis together with the nonharmonic Fourier series method.
\end{abstract}
~~~~~~~~~~\\
~~~~~~~~~~\\
{\it{2010 Mathematics Subject Classification}}. 35P05, 35G05, 81Q10, 81Q93, 93C15, 93D15. \\
{\it{Key words and phrases}}. Optical fibers, laser beams, Biharmonic Schr\"odinger, boundary control.
\maketitle
\section{Introduction and main result}
In this paper, we study the boundary controllability of
the linear fourth-order Schr\"odinger equation on the bounded interval $\(0,\ell\)$, where $\ell>0$. More precisely, we consider the following control system
\be \label{bihar2} \begin{cases} i\partial_ty(t,x)=-\partial_x^4y(t,x)+
\gamma\partial_x^2y(t,x), &(t,x)\in(0,T)\times(0,\ell),\\
y(t, 0)=y(t, \ell) = \partial_x^2y(t,\ell) =0,~\partial_x^2y(t,0) = f(t),&t\in(0,T),\\
y(0, x) = y_0(x),& x\in(0,\ell),
\end{cases}\ee
where the parameter $\gamma<0$, $f$ is a control that acts on the left end $x = 0$, and
the function $y_0$ is the initial condition. For system \eqref{bihar2}, the appropriate control notion to study is the
exact controllability, which is defined as follows: System \eqref{bihar2} is said to be exactly controllable
at time $T>0$ if, given any initial state $y_0$, there exists a control $f$ such that the corresponding
solution $y = y(t,x)$  satisfies $y(T,.) = 0$.\\
\indent The fourth-order Schr\"odinger equation has been modeled by Karpman \cite{Karpman}, and Karpman $\&$ Shagalov \cite{Karpman1} in order to describe the propagation of intense laser beams in a bulk medium with Kerr nonlinearity when small fourth-order dispersion is taken into an account. Nowadays, equation \eqref{bihar2} is often called the biharmonic Schr\"odinger equation
and {has various applications in several areas of physics, such as}  nonlinear optics, plasma physics and nonuniform dielectric media, see \cite{Ben, CuiGuovcb, Karlsson, Fibich}. \\
\indent Notice that, in the case where $\partial_x^4y\equiv0$, Equation \eqref{bihar2} collapses to the standard second order Schr\"odinger equation. In this direction, the exact controllability of the second order Schr\"odinger equation  has been extensively investigated, see for instance \cite{Ammari20170,Ammari20121bcb, Ammari2019}. In the case where $\gamma=0$, Zheng and Zhongcheng \cite{Zheng} proved that the biharmonic Schr\"odinger equation \eqref{bihar2} with clamped boundary conditions and Neumann boundary control is exactly controllable at time $T > 0$. Later on, Wen et al. \cite{Wen1} established well-posedness and exact controllability results for a system described by the fourth order Schr\"odinger equation in \eqref{bihar2} for $\gamma=0,$ on a bounded domain of $\R^n~(n\geq2)$ with boundary control and collocated observation. Along similar lines, the same authors in \cite{Wen2}, extended these results to the case of a linear fourth-order multi-dimensional Schr\"odinger equation with hinged boundary control and collocated observation. Recently, in the case where $\gamma=1,$  Capistrano-Filho and Cavalcante \cite{Capistrano} proved global stabilization and exact controllability results for the biharmonic Schr\"odinger equation \eqref{bihar2} on a periodic domain $\mathbb{T}$ with internal control supported on an arbitrary sub-domain of $\mathbb{T}$. Whereas, in the case where $\gamma(x)\geq0$, the authors \cite{Ammari20225} showed that the fourth-order Schr\"odinger equation \eqref{bihar2} with clamped boundary conditions and boundary control acting on the first spatial derivative at one endpoint is exactly controllable at time $T > 0$.\\
 \indent As we know, the exact controllability of System \eqref{bihar2} with $\gamma<0$ has not been studied in the literature yet. As we will see, that the parameter $\gamma$ plays a key role in understanding the dynamics of System \eqref{bihar2}. Indeed, we first prove that the
eigenvalues $\({\lambda_{n}}\)_{n\in\N^*}$ associated to
Problem \eqref{bihar2} with $f(t)\equiv0,$ are bounded from below by the quantity $-\frac{\gamma^2}{4}$. Furthermore, we show that the finite number of negative eigenvalues $\({\lambda_{n}}\)_{n\in\N^*}$ are algebraically simple, if and only if, the parameter $\gamma$  does not belong to the following critical countable set of negative real numbers:
\be {\label{set1}{\Gamma^*}=\left\{- \, \frac{\pi^2}{\ell^2}\(p^2+q^2\)~:~p,q\in\N^*,~1\leq p<q\right\}.}\ee
Therefore, using the nonharmonic Fourier series method, we establish  positive and negative exact controllability results for System \eqref{bihar2}. Namely, our main result is the following:
\begin{theorem}
\label{hj} Given $T>0$ and
 $y_0\in H^{-1}\(0,\ell\) $. Assume that $\gamma\not\in\Gamma^*$. Then, there
exists a control $f\in L^{2}(0,T)$ such that the solution $y$ of the
control problem \eqref{bihar2}
 satisfies
\begin{equation*}
y(T,x)=0,~~x\in\[0,\ell\].
\end{equation*}
 Moreover, if $\gamma\in\Gamma^*,$ then System \eqref{bihar2} is not exactly controllable at time $T>0$.
\end{theorem}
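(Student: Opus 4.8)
\emph{Plan of proof.} I would prove Theorem~\ref{hj} by the Hilbert Uniqueness Method combined with the moment (nonharmonic Fourier series) method, using the spectral data already recorded: the homogeneous operator behind \eqref{bihar2} has eigenvalues $\lambda_n=\frac{n^2\pi^2}{\ell^2}\big(\frac{n^2\pi^2}{\ell^2}+\gamma\big)$, $n\in\N^*$, with $L^2(0,\ell)$-normalized eigenfunctions $\phi_n(x)=\sqrt{2/\ell}\,\sin(n\pi x/\ell)$, so that $\phi_n'(0)=\sqrt{2/\ell}\,(n\pi/\ell)\neq0$ and $\phi_n'(0)\asymp n$. First I would define the solution of \eqref{bihar2} by transposition — for $y_0\in H^{-1}(0,\ell)$ and $f\in L^2(0,T)$ the solution should live in $C([0,T];H^{-1}(0,\ell))$ — and record the hidden-regularity bound $\|\partial_x\psi(\cdot,0)\|_{L^2(0,T)}\lesssim\|\psi(0)\|_{H^1_0(0,\ell)}$ for the adjoint problem $i\partial_t\psi=-\partial_x^4\psi+\gamma\partial_x^2\psi$ with hinged conditions $\psi(\cdot,0)=\psi(\cdot,\ell)=\partial_x^2\psi(\cdot,0)=\partial_x^2\psi(\cdot,\ell)=0$. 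Multiplying \eqref{bihar2} by $\overline{\psi}$ and integrating by parts in $x$, every boundary term cancels except the one carrying the control, so that $y(T,\cdot)=0$ becomes equivalent to
\begin{equation*}
i\,\langle y_0,\psi(0)\rangle=\int_0^T f(t)\,\overline{\partial_x\psi(t,0)}\,dt\qquad\text{for all adjoint solutions }\psi,
\end{equation*}
with $\langle\cdot,\cdot\rangle$ the $H^{-1}$--$H^1_0$ pairing. Expanding $\psi(t)=\sum_{n\geq1}b_n e^{i\lambda_n t}\phi_n$ and letting $(b_n)$ run over $\ell^2$, this reduces exact controllability to the moment problem of finding $f\in L^2(0,T)$ with
\begin{equation*}
\int_0^T f(t)\,e^{-i\lambda_n t}\,dt=d_n:=\frac{i\,\langle y_0,\phi_n\rangle}{\phi_n'(0)},\qquad n\in\N^*.
\end{equation*}

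For the positive part I would use the hypothesis $\gamma\notin\Gamma^*$ as follows. It forces the $\lambda_n$ to be pairwise distinct (recall $\lambda_n=\lambda_m$ with $n\neq m$ entails $\gamma=-\frac{\pi^2}{\ell^2}(n^2+m^2)\in\Gamma^*$), and since $\lambda_n\asymp n^4$ the consecutive gaps satisfy $\lambda_{n+1}-\lambda_n\asymp n^3\to+\infty$; combined with distinctness of the finitely many small eigenvalues this yields a uniform separation $\inf_{n\neq m}|\lambda_n-\lambda_m|>0$. By the classical theory of nonharmonic Fourier series, a real separated family whose consecutive gaps tend to $+\infty$ generates a Riesz sequence $\{e^{i\lambda_n t}\}_{n\in\N^*}$ in $L^2(0,T)$ for \emph{every} $T>0$, so the above moment problem is solvable for any $(d_n)\in\ell^2$, with $\|f\|_{L^2(0,T)}\lesssim\|(d_n)\|_{\ell^2}$. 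Finally, since the $\phi_n$ are the Dirichlet eigenfunctions of $-\partial_x^2$ and $|\phi_n'(0)|^2\asymp n^2$ is comparable to the $n$-th Dirichlet eigenvalue, one gets $\sum_n|d_n|^2\asymp\sum_n n^{-2}|\langle y_0,\phi_n\rangle|^2\asymp\|y_0\|_{H^{-1}(0,\ell)}^2<\infty$; hence the required control $f\in L^2(0,T)$ exists, which is the claimed exact controllability.

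For the negative part, write $\gamma=-\frac{\pi^2}{\ell^2}(p^2+q^2)$ with $1\leq p<q$, so $\lambda_p=\lambda_q=-\frac{p^2q^2\pi^4}{\ell^4}$ and the corresponding eigenspace contains the independent functions $\phi_p,\phi_q$. I would form
\begin{equation*}
\phi^*:=\phi_q'(0)\,\phi_p-\phi_p'(0)\,\phi_q ,
\end{equation*}
which, since $\phi_p'(0),\phi_q'(0)\neq0$, is a nonzero eigenfunction satisfying the hinged conditions and, in addition, $(\phi^*)'(0)=0$. Taking the adjoint solution $\psi(t)=e^{i\lambda_p t}\phi^*$ makes the right-hand side of the duality identity vanish identically, forcing $\langle y_0,\phi^*\rangle=0$ for every $y_0$ steerable to $0$ at time $T$; choosing $y_0=\phi^*\in L^2(0,\ell)\hookrightarrow H^{-1}(0,\ell)$, for which $\langle\phi^*,\phi^*\rangle=\|\phi^*\|_{L^2}^2\neq0$, exhibits an uncontrollable initial state. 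Hence \eqref{bihar2} is not exactly controllable, for any $T>0$.

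The main obstacle, as I see it, is twofold. First, upgrading the Riesz-sequence estimate to \emph{every} $T>0$: this depends on the superlinear gap growth $\lambda_{n+1}-\lambda_n\asymp n^3$ (so that, after deleting finitely many frequencies, the remaining gap exceeds $2\pi/T$) together with minimality of the full exponential family, which in turn requires distinctness of all $\lambda_n$, i.e. the hypothesis $\gamma\notin\Gamma^*$. Second, the exact matching of functional settings — that $f\in L^2(0,T)$ corresponds precisely to $y_0\in H^{-1}(0,\ell)$ — which rests on the explicit traces $\phi_n'(0)=\sqrt{2/\ell}\,(n\pi/\ell)$, on the hidden-regularity bound for the adjoint problem, and on setting up the transposition solution so that all the pairings above are legitimate. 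By contrast the negative part is soft once the degenerate eigenfunction $\phi^*$ with $(\phi^*)'(0)=0$ has been identified.
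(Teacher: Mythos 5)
Your proposal is correct and follows essentially the same route as the paper: the explicit spectral data (eigenvalues $\lambda_n=\frac{n^2\pi^2}{\ell^2}\left(\frac{n^2\pi^2}{\ell^2}+\gamma\right)$ with sine eigenfunctions), a Beurling-type Riesz-sequence property of $\{e^{i\lambda_n t}\}$ valid for every $T>0$ thanks to the cubic gap growth and the simplicity of the finitely many negative eigenvalues guaranteed by $\gamma\notin\Gamma^*$, transposition/duality to match $L^2(0,T)$ controls with $H^{-1}(0,\ell)$ data, and, when $\gamma=-\frac{\pi^2}{\ell^2}(p^2+q^2)\in\Gamma^*$, the double-eigenvalue combination proportional to $q\sin(p\pi x/\ell)-p\sin(q\pi x/\ell)$, whose derivative vanishes at $x=0$, as the obstruction. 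The only differences are presentational: you solve the moment problem directly where the paper proves the two-sided observability inequality and invokes Lions' HUM, and you exhibit the uncontrollable datum $\phi^*$ explicitly rather than phrasing the negative part as failure of observability, but both rest on the same spectral construction.
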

The remainder of this paper is organized as follows: In Section $2$, we investigate the main properties of all the
eigenvalues $\({\lambda_{n}}\)_{n\in\N^*}$ associated to
the control system \eqref{bihar2}. In Section $3$, we prove the observability estimate for uncontrolled system \eqref{bihar2} with $f(t)\equiv0$. Finally, in Section $4$, we give the proof of our main exact controllability result.
\section{Spectral analysis}\label{Spe}
We consider the following spectral problem which arises by applying
separation of variables to system \eqref{bihar2} with $f(t)\equiv0$,
\begin{align}\label{biharr1}\begin{cases}
               \phi''''-\gamma\phi''=\lambda \phi,~~x\in(0,\ell),  \\
               \phi(0)=\phi''(0)=\phi(\ell)=\phi''(\ell)=0.
             \end{cases}\end{align}
Let $L^2(0,\ell)$ be the Lebesgue space of all functions $y$ defined on $(0,\ell),$ being equipped with the norm $$ \|y\|_{L^2(0,\ell)}= \(\int_0^\ell |y(x)|^2dx\)^{\frac{1}{2}},~\forall~ y\in L^2(0,\ell).$$
We consider the Sobolev space $H^{2}(0,\ell)\cap H^{1}_0(0,\ell)$
endowed with the norm $$
\|y\|_{H^{2}\cap H^{1}_0(0,\ell)}=\|{y}''\|_{L^2(0,\ell)},~\forall~y\in H^{2}(0,\ell)\cap H^{1}_0(0,\ell).$$
We introduce the operator
$\mathcal{A}$ defined in $L^2(0,\ell)$ by setting:
 $$ \mathcal{A}y = \phi''''-\gamma \phi'', $$
 on the domain
 $$ \mathcal D\(\mathcal{A}\):= \left\{\phi\in H^{4}(0,\ell)~:~\phi(0)=\phi''(0)=\phi(\ell)=\phi''(\ell)=0\right\},$$
which is dense in $L^2(0,\ell)$.
\begin{proposition}\label{rr}
The linear operator $\mathcal{A}$ is positive and self-adjoint such
that $\mathcal{A}^{-1}$ is compact. Moreover, the spectrum of
$\mathcal{A}$ is discrete and consists of a
 sequence of real eigenvalues
$(\lambda_{n})_{n\in\mathbb{N}^{*}}$ tending to $+\infty$:
{$$ -\frac{\gamma^2}{4} \leq\lambda_{1}\leq\lambda_{2}\leq \ldots\leq\lambda_{n}\leq \ldots
\underset{n\rightarrow +\infty}{\to}+\infty.$$} The corresponding eigenfunctions $(\Phi_{n})_{n\in\N^*}$ can be chosen
to form an orthonormal basis of $L^2(0,\ell)$.
\end{proposition}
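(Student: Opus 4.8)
The plan is to realize $\mathcal{A}$, up to a spectral shift, as the inverse of a compact, self-adjoint, positive operator built from a coercive bilinear form on $V:=H^{2}(0,\ell)\cap H^{1}_0(0,\ell)$, and then to read every assertion off the spectral theorem for such operators; the lower bound $-\gamma^2/4$ and the explicit eigenelements can be recovered either variationally or by solving \eqref{biharr1} directly. First I would check that $\mathcal{A}$ is symmetric: for $\phi,\psi\in\mathcal D(\mathcal{A})$, two integrations by parts together with $\phi(0)=\phi(\ell)=\phi''(0)=\phi''(\ell)=0$ (and the same for $\psi$) kill all boundary terms and give
$$\langle\mathcal{A}\phi,\psi\rangle_{L^2(0,\ell)}=\int_0^\ell\phi''\,\overline{\psi''}\,dx+\gamma\int_0^\ell\phi'\,\overline{\psi'}\,dx=:a(\phi,\psi),$$
so $a$ is Hermitian and $\langle\mathcal{A}\phi,\phi\rangle_{L^2}=\|\phi''\|_{L^2}^2+\gamma\|\phi'\|_{L^2}^2$. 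Since $\phi\in H^1_0(0,\ell)$, an integration by parts and Cauchy--Schwarz give $\|\phi'\|_{L^2}^2=-\int_0^\ell\phi''\,\overline\phi\,dx\le\|\phi''\|_{L^2}\|\phi\|_{L^2}$, and then, as $\gamma<0$, Young's inequality yields the semiboundedness
$$\langle\mathcal{A}\phi,\phi\rangle_{L^2}\ \geq\ \|\phi''\|_{L^2}^2-|\gamma|\,\|\phi''\|_{L^2}\,\|\phi\|_{L^2}\ \geq\ -\,\frac{\gamma^2}{4}\,\|\phi\|_{L^2}^2 .$$

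Next, fix any constant $c>\gamma^2/4$ and consider on $V$ the form $a_c(\phi,\psi):=a(\phi,\psi)+c\,\langle\phi,\psi\rangle_{L^2}$. It is continuous on $V\times V$, and combining the interpolation bound with Young's inequality (choosing the weights so that a positive fraction of $\|\phi''\|_{L^2}^2$ survives) one gets coercivity, $a_c(\phi,\phi)\geq C_0\,\|\phi''\|_{L^2}^2$ for some $C_0=C_0(c,\gamma)>0$, i.e. $a_c$ dominates $\|\cdot\|_{H^{2}\cap H^{1}_0(0,\ell)}^2$. By the Lax--Milgram theorem, for every $g\in L^2(0,\ell)$ there is a unique $u=:Bg\in V$ with $a_c(u,\psi)=\langle g,\psi\rangle_{L^2}$ for all $\psi\in V$. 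Testing first against $\psi\in C_0^\infty(0,\ell)$ shows $u''''-\gamma u''+c\,u=g$ in the distributional sense, whence $u\in H^4(0,\ell)$; integrating by parts back in the weak identity then forces the natural boundary conditions $u''(0)=u''(\ell)=0$, while $u(0)=u(\ell)=0$ is built into $V$. Hence $u\in\mathcal D(\mathcal{A})$ and $(\mathcal{A}+c)u=g$, so $B=(\mathcal{A}+c)^{-1}$ is a bounded operator on $L^2(0,\ell)$; it is self-adjoint and positive since $a_c$ is Hermitian and coercive, and compact since $V\hookrightarrow L^2(0,\ell)$ compactly (Rellich). Therefore $\mathcal{A}=B^{-1}-c$ is self-adjoint with compact resolvent.

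The spectral theorem for the compact self-adjoint positive operator $B$ now gives an orthonormal basis $(\Phi_n)_{n\in\N^*}$ of $L^2(0,\ell)$ of eigenfunctions of $B$, with eigenvalues $\mu_n>0$, $\mu_n\to0$. Setting $\lambda_n:=\mu_n^{-1}-c$ gives $\mathcal{A}\Phi_n=\lambda_n\Phi_n$, $\lambda_n\to+\infty$, the spectrum $\sigma(\mathcal{A})=\{\lambda_n:n\in\N^*\}$ is discrete, the $\lambda_n$ are real (self-adjointness), and the semiboundedness estimate gives $\lambda_n=\langle\mathcal{A}\Phi_n,\Phi_n\rangle_{L^2}\geq-\gamma^2/4$ for every $n$. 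Alternatively --- and most directly for this concrete operator --- one solves \eqref{biharr1} by hand: the hinged conditions are satisfied by $\Phi_n(x)=\sqrt{2/\ell}\,\sin(n\pi x/\ell)$, with $\lambda_n=(n\pi/\ell)^4+\gamma(n\pi/\ell)^2$, and since these $\Phi_n$ already form an orthonormal basis of $L^2(0,\ell)$ (and $\mathcal{A}$ is self-adjoint) there are no other eigenvalues; writing $t_n=(n\pi/\ell)^2>0$ and minimizing $t\mapsto t^2+\gamma t$ over $t>0$ reproves $\lambda_n\geq-\gamma^2/4$ and shows that only the finitely many indices with $(n\pi/\ell)^2<-\gamma$ give negative eigenvalues.

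The only step requiring some care is the identification of $\mathcal D(\mathcal{A})$ from the variational problem --- namely that the Lax--Milgram solution lies in $H^4(0,\ell)$ and obeys the natural boundary conditions $u''(0)=u''(\ell)=0$ --- together with keeping the shift $c$ strictly above $\gamma^2/4$ so as to retain coercivity (the constant $-\gamma^2/4$ being sharp, as the explicit computation shows). Everything else is standard Hilbert-space spectral theory, and the explicit sine eigenbasis bypasses the regularity argument entirely.
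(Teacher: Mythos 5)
Your proof is correct, and it is both more detailed and in places genuinely different from the paper's argument. The semiboundedness step is the same in substance: the paper completes the square via $\int_0^\ell\bigl(\phi''\pm\tfrac{\gamma}{2}\phi\bigr)^2dx\geq0$ after the integration by parts $\langle\mathcal{A}\phi,\phi\rangle=\|\phi''\|^2_{L^2}+\gamma\|\phi'\|^2_{L^2}$, which is exactly your Cauchy--Schwarz-plus-Young estimate in disguise. Where you diverge is in establishing self-adjointness and discreteness: the paper simply asserts that $\mathrm{Ran}(\mathcal{A}-i\,Id)=L^2(0,\ell)$ ``is easy to show'' and that $\mathcal{A}^{-1}$ is compact because $H^2\cap H^1_0$ embeds compactly in $L^2$, whereas you carry out the Lax--Milgram/Friedrichs-type construction for the shifted form $a_c$ with $c>\gamma^2/4$, identify the domain through elliptic regularity and the natural boundary conditions, and obtain a compact self-adjoint resolvent to which the spectral theorem applies. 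This buys you something real: since $\mathcal{A}$ can have zero or negative eigenvalues (cf.\ Theorem~\ref{Lem2}(b),(c)), the operator is not literally positive and $\mathcal{A}^{-1}$ need not even exist, so the paper's statement and proof are only correct after a shift --- your argument supplies precisely that repair, at the cost of the regularity/boundary-condition bookkeeping you flag. Your second, explicit route (the sine functions $\sqrt{2/\ell}\,\sin(n\pi x/\ell)$ diagonalize $\mathcal{A}$ with $\lambda_n=\tfrac{n^2\pi^2}{\ell^2}\bigl(\tfrac{n^2\pi^2}{\ell^2}+\gamma\bigr)$, and completeness of the sine system plus symmetry excludes further spectrum) is not used by the paper at this stage but anticipates and is consistent with the computations of Theorem~\ref{Lem2}, and it gives the lower bound $-\gamma^2/4$ and the finiteness of the negative eigenvalues for free.
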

\begin{proof}
Let $y \in\mathcal D\(\mathcal{A}\)$, then { using an }integration by parts, we
have \bea \langle \mathcal{A}\phi,
\phi\rangle_{L^2(0,\ell)}&=&\int_{0}^{\ell}\(\phi'''' (x) -\gamma \phi'' (x)\){\phi}(x) dx \nonumber\\
&=&\int_{0}^{\ell}\(\phi''(x)\)^{2}+\gamma\(\phi'(x)\)^{2}dx.
\eea Since $\displaystyle \int_{0}^{\ell}\({\phi''(x)}+\frac{\gamma\phi(x)}{2}\)^{2}dx\geq0,$ then $$\langle \mathcal{A}\phi,
\phi\rangle_{L^2(0,\ell)}\geq -\frac{\gamma^2}{4}\|\phi\|^2_{L^2(0,\ell)}.$$
This implies that the quadratic
form is bounded from below by $-\frac{\gamma^2}{4}$, and then, the linear operator
$\mathcal{A}$ is symmetric. Furthermore, it is easy to show that $Ran(\mathcal{A}-iId)=L^2(0,\ell)$, and this means that
$\mathcal{A}$ is {self-adjoint}. Since, the space $H^{2}(0,\ell)\cap H^{1}_0(0,\ell)$ is continuously and compactly embedded in the space
$L^2(0,\ell)$, then $\mathcal{A}^{-1}$ is compact in
$L^2(0,\ell)$.
\end{proof}
\begin{theorem}\label{Lem2}
One has:\\
{\bf(a)} All the positives eigenvalues of Problem
\eqref{biharr1} are algebraically simple and satisfy \be \lambda_n=
\frac{n^2\pi^2}{\ell^2}\(\frac{n^2\pi^2}{\ell^2}+\gamma\) \hbox{ for }~ n\geq n_0:=\[\ell{\pi}^{-1}{\sqrt{|\gamma|}}
\]+1,\label{gb}\ee
where $\[\bullet\]$ denotes the integer part of $\bullet$.\\
{\bf(b)} $\lambda=0$ is a simple eigenvalue of Problem
\eqref{biharr1} if and only if \be \gamma=-\frac{p^2\pi^2}{\ell^2},~p\in\N^*.\label{guo}\ee
{\bf(c)} All the negatives eigenvalues of Problem
\eqref{biharr1} satisfy \be \lambda_n=
\frac{n^2\pi^2}{\ell^2}\(\frac{n^2\pi^2}{\ell^2}+\gamma\) \hbox{ for } n\leq\[\ell{\pi}^{-1}{\sqrt{|\gamma|}}
\]-1.\label{gh}\ee
Moreover, they are algebraically simple if and only if $\gamma\not\in \Gamma^*$.
\end{theorem}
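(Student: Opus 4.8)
The plan is to compute the whole spectrum of $\mathcal{A}$ explicitly and then to read off parts \textbf{(a)}, \textbf{(b)}, \textbf{(c)} from elementary properties of the quadratic $g(t):=t(t+\gamma)=t^2+\gamma t$. Set $\mu_n:=n^2\pi^2/\ell^2$ and $e_n(x):=\sqrt{2/\ell}\,\sin(n\pi x/\ell)$ for $n\in\N^*$. A one-line check shows $e_n\in\mathcal D(\mathcal{A})$ (the hinged conditions hold because $e_n''=-\mu_n e_n$) and $\mathcal{A}e_n=e_n''''-\gamma e_n''=(\mu_n^2+\gamma\mu_n)e_n=g(\mu_n)\,e_n$. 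Since $\{e_n\}_{n\in\N^*}$ is an orthonormal basis of $L^2(0,\ell)$ and $\mathcal{A}$ is self-adjoint with compact resolvent by Proposition~\ref{rr}, any eigenfunction $\psi=\sum_n c_ne_n$ obeys $\lambda c_n=\langle\mathcal{A}\psi,e_n\rangle=\langle\psi,\mathcal{A}e_n\rangle=g(\mu_n)c_n$ for every $n$; hence the point spectrum of $\mathcal{A}$ is exactly $\{g(\mu_n):n\in\N^*\}$, the eigenspace of an eigenvalue $\lambda$ is the span of the $e_n$ with $g(\mu_n)=\lambda$, and the (algebraic $=$ geometric) multiplicity of $\lambda$ equals $\#\{n\in\N^*:g(\mu_n)=\lambda\}$.

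Because $\gamma<0$, $g$ is a parabola with minimum value $-\gamma^2/4$ at $t=|\gamma|/2$, strictly decreasing on $(0,|\gamma|/2)$ and strictly increasing on $(|\gamma|/2,+\infty)$, with $g(t)<0\Leftrightarrow 0<t<|\gamma|$ and $g(t)=0\Leftrightarrow t\in\{0,|\gamma|\}$; moreover $g(s)-g(t)=(s-t)(s+t-|\gamma|)$, so $g(s)=g(t)$ with $s\neq t$ iff $s+t=|\gamma|$. As $(\mu_n)$ is strictly increasing, for each $\lambda$ the set $\{n:g(\mu_n)=\lambda\}$ has at most two elements, and has two for some $\lambda$ if and only if there exist $p<q$ with $\mu_p+\mu_q=|\gamma|$, that is $\gamma=-\frac{\pi^2}{\ell^2}(p^2+q^2)$, i.e. $\gamma\in\Gamma^*$; in that case the repeated eigenvalue $g(\mu_p)$ is negative since $\mu_p<\mu_p+\mu_q=|\gamma|$.

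Everything now follows. \textbf{(a)} $g(\mu_n)>0\Leftrightarrow\mu_n>|\gamma|\Leftrightarrow n>\ell\pi^{-1}\sqrt{|\gamma|}\Leftrightarrow n\ge n_0$; if $g(\mu_m)=g(\mu_n)$ with $n\ge n_0$ then $\mu_m=|\gamma|-\mu_n<0$, a contradiction, so every positive eigenvalue is simple; and since for $\mu_n>|\gamma|$ the quantity $g(\mu_m)-g(\mu_n)=(\mu_m-\mu_n)(\mu_m+\mu_n-|\gamma|)$ has the sign of $\mu_m-\mu_n$, exactly $n-1$ of the eigenvalues lie below $g(\mu_n)$, so $g(\mu_n)=\frac{n^2\pi^2}{\ell^2}\big(\frac{n^2\pi^2}{\ell^2}+\gamma\big)$ is the $n$-th eigenvalue for every $n\ge n_0$. \textbf{(b)} $0$ is an eigenvalue iff $\mu_n=|\gamma|$ for some $n$, i.e. $\gamma=-p^2\pi^2/\ell^2$ with $p\in\N^*$, and then its multiplicity is $\#\{n:\mu_n=|\gamma|\}=1$. \textbf{(c)} $g(\mu_n)<0\Leftrightarrow 0<\mu_n<|\gamma|\Leftrightarrow n<\ell\pi^{-1}\sqrt{|\gamma|}$, which gives the displayed formula, with negative value, at least for all $n\le\ell\pi^{-1}\sqrt{|\gamma|}-1$; and by the previous paragraph all the negative eigenvalues are simple if and only if $\gamma\notin\Gamma^*$.

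The genuinely delicate point is the first paragraph: showing that the sine modes $e_n$ exhaust the eigenfunctions, which turns every multiplicity assertion into the above counting, plus some care with the index bookkeeping around $n\simeq\ell\pi^{-1}\sqrt{|\gamma|}$ and with the borderline case $\gamma=-p^2\pi^2/\ell^2$ of \textbf{(b)} (where the index $n=p$ carries the zero eigenvalue and hence is not among the negative ones). A reader who prefers to avoid the spectral-theorem shortcut can instead argue directly: the characteristic equation $r^4-\gamma r^2-\lambda=0$ with $s=r^2$ gives $s^2-\gamma s-\lambda=0$ with roots $s_\pm$ (real because $\lambda\ge-\gamma^2/4$ by Proposition~\ref{rr}) satisfying $s_++s_-=\gamma$, $s_+s_-=-\lambda$; writing the general solution, imposing $\phi(0)=\phi''(0)=0$ to kill the even modes, and then imposing $\phi(\ell)=\phi''(\ell)=0$ yields a $2\times2$ homogeneous system whose determinant is a nonzero multiple of $\sinh(\sqrt{s_+}\,\ell)\sin(\sqrt{-s_-}\,\ell)$ if $\lambda>0$ and of $\sin(\sqrt{-s_+}\,\ell)\sin(\sqrt{-s_-}\,\ell)$ if $-\gamma^2/4<\lambda<0$. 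The eigenspace is two-dimensional precisely when $\sin(\sqrt{-s_+}\,\ell)=\sin(\sqrt{-s_-}\,\ell)=0$, i.e. $-s_+=\mu_p$ and $-s_-=\mu_q$ with $p\neq q$, and then $\gamma=s_++s_-=-(\mu_p+\mu_q)$ is exactly membership in $\Gamma^*$; the isolated value $\lambda=-\gamma^2/4$ (coalescing characteristic roots) must be treated on its own but is an eigenvalue only for $\gamma=-2n^2\pi^2/\ell^2$ and then is simple.
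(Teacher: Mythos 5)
Your argument is correct, and it takes a genuinely different route from the paper. The paper proves the theorem by solving the ODE $\phi''''-\gamma\phi''=\lambda\phi$ separately in the three regimes $\lambda>0$, $\lambda=0$, $\lambda<0$, imposing the hinged boundary conditions and analyzing the resulting characteristic equations ($\sin(\ell\eta(\lambda))=0$, respectively $\sin(\ell\xi(\lambda))\sin(\ell\bar\xi(\lambda))=0$), with the multiplicity discussion carried out on the $2\times2$ boundary system. You instead observe that the hinged conditions are exactly compatible with the sine basis $e_n(x)=\sqrt{2/\ell}\,\sin(n\pi x/\ell)$, so that $\mathcal{A}e_n=g(\mu_n)e_n$ with $g(t)=t(t+\gamma)$, $\mu_n=n^2\pi^2/\ell^2$; expanding an arbitrary eigenfunction in this orthonormal basis and using symmetry of $\mathcal{A}$ shows the spectrum is exactly $\{g(\mu_n)\}$ with multiplicity equal to $\#\{n:g(\mu_n)=\lambda\}$, and the whole theorem reduces to elementary properties of the parabola $g$, in particular the identity $g(s)-g(t)=(s-t)(s+t-|\gamma|)$, which makes the resonance $\mu_p+\mu_q=|\gamma|$ (i.e.\ $\gamma\in\Gamma^*$) the transparent source of double eigenvalues. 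This buys several things: all signs of $\lambda$ are handled at once, the indexing claims become a clean counting argument (your check that for $n\geq n_0$ exactly $n-1$ eigenvalues lie below $g(\mu_n)$ is the careful version of the paper's implicit ordering), and it corrects a slight imprecision in the paper's case (i), where it is asserted that ``each eigenvalue'' is of multiplicity two when $\gamma\in\Gamma^*$, whereas only the resonant pair(s) are. What the paper's ODE route buys in exchange is independence from having an explicit diagonalizing basis, so it adapts to boundary conditions or variable coefficients where no such basis exists; your closing sketch of that route (including the coalescent case $\lambda=-\gamma^2/4$, which the paper does not isolate) is also accurate.
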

\begin{proof}
{\bf (a)} Since $\lambda>0$, any non-trivial solution $\phi(x,\lambda)$ of the fourth-order linear differential equation \be \phi''''-\gamma\phi''-\lambda \phi=0,~~x\in(0,\ell),\label{gv}\ee
may be expressed as follows:
$$ \phi(x,\lambda)=C_1\sin(x\eta(\lambda))+C_2\cos(x\eta(\lambda))+C_3\sinh(x\bar\eta(\lambda))
+C_4\cosh(x\bar\eta(\lambda)),$$
for some constants $C_j,~j=1,2,3,4,$ where
$$\eta(\lambda)=\sqrt{\frac{\sqrt{\gamma^2+4\lambda}-\gamma}{2}} \hbox{ and } \bar\eta(\lambda)=\sqrt{\frac{\sqrt{\gamma^2+4\lambda}+\gamma}{2}}.$$
Using the fact that $\bar\eta(\lambda)>0,$ and the boundary conditions in \eqref{biharr1}, we deduce that the positive eigenvalues of \eqref{biharr1} are solutions of the characteristic equation $\sin(\ell\eta(\lambda))=0.$
Consequently, all positives eigenvalues of Problem
\eqref{biharr1} satisfy
$$\lambda_n=\frac{n^2\pi^2}{\ell^2}\(\frac{n^2\pi^2}{\ell^2}+\gamma\),
~n>\frac{\ell\sqrt{|\gamma|}}{\pi}, $$
and this proves relation \eqref{gb}. The corresponding
eigenfunctions $\phi(x,\lambda_n),$ up to a multiplicative constant, have the form  \be \phi(x,\lambda_n)=\sin(x\eta(\lambda_n)),~n>\frac{\ell\sqrt{|\gamma|}}{\pi},\label{hgm}\ee
and then, each eigenvalue $\lambda_n$ of Problem
\eqref{biharr1} is simple.\\
{\bf (b)} In this case, any non-trivial solution $\phi(x,\lambda)$  of Equation \eqref{gv} may be written as the following,
$$ \phi(x,\lambda)=C_1+C_2x+C_3\sin\(\sqrt{|\gamma|}x\)+C_4\cos\(\sqrt{|\gamma|}x\),$$
for some constants $C_j,~j=1,2,3,4.$ Using the boundary conditions at $x=0$, one gets $C_1=C_4=0$. If \eqref{guo} is not satisfied, then, from the boundary conditions at $x=\ell$, one has $C_2=C_3=0$. This implies that
$ \phi(x,\lambda)\equiv0,$ which is a contradiction. Conversely, if \eqref{guo} holds, then, $C_2=0$. Thus, the eigenfunction $\phi(x,\lambda)$ associated to the eigenvalue $\lambda=0$, up to a multiplicative constant, has the form \be \phi(x,\lambda)= \sin\(\sqrt{|\gamma|}x\).\label{dog1}\ee Therefore, $\lambda=0$ is a simple eigenvalue of Problem \eqref{biharr1}. \\
{\bf(c)} Since $\lambda<0$, any non-trivial solution $\phi(x,\lambda)$  of Equation \eqref{gv} may be expressed as follows:
$$ \phi(x,\lambda)=C_1\sin(x\xi(\lambda))+C_2\cos(x\xi(\lambda))+C_3\sin(x\bar\xi(\lambda))+
C_4\cos(x\bar\xi(\lambda)),$$
for some constants $C_j,~j=1,2,3,4,$ where
\be\xi(\lambda)=\sqrt{\frac{\sqrt{\gamma^2+4\lambda}-\gamma}{2}} \hbox{ and } \bar\xi(\lambda)=\sqrt{\frac{-\sqrt{\gamma^2+4\lambda}-\gamma}{2}}.\label{drog}\ee
Using the boundary conditions at $x=0$, one has \be \phi(x,\lambda)=C_1\sin(x\xi(\lambda))+C_3\sin(x\bar\xi(\lambda)).\label{rj}\ee Thus, by the boundary conditions at $x=\ell$, $\lambda$ is an eigenvalue of Problem \eqref{biharr1} if and only if the function \be \sin(\ell\xi(\lambda))\sin(\ell\bar\xi(\lambda))=0.\label{7}\ee
Therefore, relation \eqref{gh} is proved. Since, 
$$ \xi(\lambda) \not=\bar\xi(\lambda) \hbox{ and } \(\xi(\lambda)\)^2+\(\bar\xi(\lambda)\)^2=|\gamma|,$$
then, by relation \eqref{7}, the following cases must be examined :\\
{\bf (i)} $\dfrac{\xi(\lambda)}{\bar\xi(\lambda)}=\frac{p}{q}$, where  $p,q\in\N^*$ and $p\not=q$. Or equivalently, $\gamma\in \Gamma^*$. Then, by relation \eqref{rj} and the boundary conditions at $x=\ell$, the eigenfunctions $\phi(x,\lambda_n)$ of Problem \eqref{biharr1}, have the form
\be
  \phi(x,\lambda_n)= C_1\sin(x\xi(\lambda_n))+C_3\sin(x\bar\xi(\lambda_n)),\label{gdv}
\ee 
where $|C_1|+|C_3|\not=0.$
Thus, each eigenvalue $\lambda_n$ of Problem \eqref{biharr1} is of  multiplicity two.\\
{\bf (ii)} $\dfrac{\xi(\lambda)}{\bar\xi(\lambda)}\not=\frac{p}{q}$, where  $p,q\in\N^*$. Or equivalently,  $\gamma\not\in \Gamma^*$. Then, by relation \eqref{rj} and the boundary conditions at $x=\ell$, the eigenfunctions $\phi(x,\lambda_n)$ of Problem \eqref{biharr1}, up to a multiplicative constant, have the form  \be \phi(x,\lambda_n)=\sin(x\eta(\lambda_n)),~n<\frac{\ell\sqrt{|\gamma|}}{\pi}. \label{dog}\ee
Therefore, each eigenvalue $\lambda_n$ of Problem \eqref{biharr1} is a simple. The proof is then complete.
\end{proof}
As a consequence of Theorem \ref{Lem2}, we have the following result.
\begin{corollary}\label{SP2} One has:\\
{\bf (a)} Let $(\la_{n})_{n\geq n_0}$ be the increasing sequence of positive eigenvalues  of the
spectral problem \eqref{biharr1} given by relation \eqref{gb}. Then, the following uniform gap condition holds
 \be {\la_{n+1}}-{\la_n}>C,~\hbox{as } n\to\infty,\label{biharr4}\ee
for some positive constant $C>0$.\\
{\bf (b)} Let $(\Phi_{n})_{n\geq 1}$ be the sequence of normalized eigenfunctions of the spectral problem \eqref{biharr1} so that
$\lim_{n\to\infty}\|{\Phi}_n\|_{L^2_\rho(0,\ell)}=1$. Then, one has  \be \dfrac{\left|
{\Phi_{n}'(0)}\right|}{\sqrt[4]{\la_n}}= \sqrt{\frac{2}{\ell}},~\hbox{as } n\to\infty.\label{biharr18}\ee
\end{corollary}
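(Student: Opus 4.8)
Both assertions I would obtain directly from the closed-form description of the large eigenvalues and eigenfunctions given in Theorem~\ref{Lem2}\,\textbf{(a)}. For part~\textbf{(a)}, the plan is simply to subtract consecutive values of \eqref{gb}. Writing $\mu_n:=n^2\pi^2/\ell^2$, so that $\lambda_n=\mu_n^2+\gamma\mu_n$ for $n\geq n_0$, one has
\begin{equation*}
\lambda_{n+1}-\lambda_n=(\mu_{n+1}^2-\mu_n^2)+\gamma(\mu_{n+1}-\mu_n)=(\mu_{n+1}-\mu_n)(\mu_{n+1}+\mu_n+\gamma).
\end{equation*}
Since $\mu_{n+1}-\mu_n=(2n+1)\pi^2/\ell^2$ and $\mu_{n+1}+\mu_n+\gamma\to+\infty$, the right-hand side is asymptotic to $4\pi^4n^3/\ell^4$, hence tends to $+\infty$; in particular, for any fixed $C>0$ one gets $\lambda_{n+1}-\lambda_n>C$ for all sufficiently large $n$, which is \eqref{biharr4}. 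I foresee no obstacle here: it is pure arithmetic with \eqref{gb}.

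For part~\textbf{(b)}, I would first turn the eigenfunction \eqref{hgm} into a fully explicit trigonometric function. Because $\gamma^2+4\lambda_n=(2\mu_n+\gamma)^2$ with $2\mu_n+\gamma>0$ for $n\geq n_0$, the quantity $\eta(\lambda_n)=\sqrt{(\sqrt{\gamma^2+4\lambda_n}-\gamma)/2}$ simplifies to $\sqrt{\mu_n}=n\pi/\ell$, so that \eqref{hgm} reads $\phi(x,\lambda_n)=\sin(n\pi x/\ell)$. Since $\int_0^\ell\sin^2(n\pi x/\ell)\,dx=\ell/2$, the $L^2(0,\ell)$-normalized eigenfunction is $\Phi_n(x)=\sqrt{2/\ell}\,\sin(n\pi x/\ell)$ for every large $n$ (which is also consistent with the asymptotic normalization $\|\Phi_n\|\to1$ imposed in the statement), and therefore $\Phi_n'(0)=\sqrt{2/\ell}\,(n\pi/\ell)$.

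It then remains to compare with the fourth root of the eigenvalue. From $\lambda_n=\mu_n^2(1+\gamma/\mu_n)$ one has $\sqrt[4]{\lambda_n}=\sqrt{\mu_n}\,(1+\gamma/\mu_n)^{1/4}$, so, using $\sqrt{\mu_n}=n\pi/\ell$,
\begin{equation*}
\frac{|\Phi_n'(0)|}{\sqrt[4]{\lambda_n}}=\sqrt{\frac{2}{\ell}}\;\frac{n\pi/\ell}{\sqrt{\mu_n}\,(1+\gamma/\mu_n)^{1/4}}=\sqrt{\frac{2}{\ell}}\;\frac{1}{(1+\gamma/\mu_n)^{1/4}}\;\longrightarrow\;\sqrt{\frac{2}{\ell}},
\end{equation*}
since $\mu_n\to+\infty$; this is precisely \eqref{biharr18}. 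The only points that need a little care are the identification $\eta(\lambda_n)=n\pi/\ell$ — i.e. picking the correct branch of the square root, which is legitimate exactly because $n\geq n_0$ forces $2\mu_n+\gamma>0$ — and keeping track of the normalizing constant $\sqrt{2/\ell}$; this modest bookkeeping is the only real point requiring attention, everything else being elementary.
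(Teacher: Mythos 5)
Your proof is correct and follows exactly the route the paper intends: the paper states the corollary as an immediate consequence of Theorem \ref{Lem2}, and your computation (consecutive differences of \eqref{gb} for the gap, and the identification $\eta(\lambda_n)=n\pi/\ell$ with the normalization $\sqrt{2/\ell}$ for \eqref{biharr18}) is precisely the elementary verification being left to the reader.
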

Now, we give a characterization of some fractional powers of the linear
operator $\mathcal{A}$ which will be useful in the next section. According to Lemma \ref{rr}, the operator $\mathcal{A}$ is
positive and self-adjoint, and hence it generates a scale of
interpolation spaces $\mathcal{H}_{\theta}$, $\theta \in
\mathbb{R}$. For $\theta\geq0$, the space $\mathcal{H}_{\theta}$
coincides with $\mathcal D(\mathcal{A}^{\theta})$ and is equipped
with the norm $\|u\|_\theta^2=\langle \mathcal{A}^\theta u,
\mathcal{A}^\theta u\rangle_{L^2 (0,\ell)}$, and for $\theta< 0$ it
is defined as the completion of $L^2 (0,\ell)$ with respect to this
norm. Furthermore, we have the following spectral representation of
space $\mathcal{H}_{\theta}$,
\be \mathcal{H}_{\theta}=
\left\{u(x)=\sum\limits_{n\in\N^*}c_n\Phi_n(x)~:
 ~\|u\|_{\theta}^2=\sum\limits_{n\in\N^*}|\la_n|^{2\theta}|c_n|^{2}<\infty\right\},\label{bihar8}\ee
where $\theta\in \R$, and the eigenfunctions $\(\Phi_{n}\)_{n\in\N^*}$
are defined in Lemma \ref{rr}. In particular, \be \mathcal{H}_{0}=L^2 (0,\ell) \hbox{ and } \mathcal{H}_{1/4}=H^{1}_{0}(0,\ell).\label{ahme}\ee
\section{Observability of the biharmonic Schr\"odinger equation}
In this section, we prove some observability results for the uncontrolled system,
\be{ \label{biharr22}\begin{cases}
i\partial_tz(t,x)=-\partial_x^4z(t,x)+
\gamma\partial_x^2z(t,x), &(t,x)\in(0,T)\times(0,\ell),\\
z(t, 0) =\partial_x^2z(t,0) =z(t, \ell) =\partial_x^2z(t, \ell) = 0,&t\in(0,T),\\
z(0, x) = z_0,& x\in(0,\ell).
                \end{cases}} \ee
We start by mentioning the well-posedness of System \eqref{biharr22}. Obviously, the solutions of Problem \eqref{biharr22} can be written in Fourier series as \be
{z}(t,x):=\sum\limits_{n\in\N^*}c_ne^{i\la_nt}{\Phi}_n(x),\label{bihar9}\ee
where the Fourier coefficients are given by $$c_n:=\int_{0}^{\ell} z_0 (x) \bar{\Phi}_n(x)\rho(x)dx,~n\in\N^*,$$
and $\(c_n\)\in\ell^2\(\N^*\)$. Let us denote by $\mathcal{E}_{\theta}(t)$ the {energy} associated to the space $\mathcal{H}_{\theta}$, then \begin{align*}
                            \mathcal{E}_{\theta}(t) &= \|z\|_{\theta}^2=\sum\limits_{n\in \N^*}
|{\la}_n|^{2\theta}|c_{n}e^{i\la_nt}|^2\\&= \sum\limits_{n\in \N^*}
|{\la}_n|^{2\theta}|c_{n}|^2 =\mathcal{E}_{\theta}(0),
                          \end{align*}
which establishes the conservation of energy along time. Consequently, one has:
\begin{lemma}\label{pos}
Let $\theta\in\R$ and $z_0\in\mathcal{H}_{\theta}$. Then Problem \eqref{biharr22} has a
unique solution\\ ${z}\in C([0,T],\mathcal{H}_{\theta})$. Moreover, the energy $\mathcal{E}_{\theta}(t)$ of System \eqref{biharr22} is conserved along the time.
\end{lemma}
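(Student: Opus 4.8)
The plan is to treat System \eqref{biharr22} as the abstract Schrödinger-type equation $i\partial_t z = \mathcal{A} z$, $z(0) = z_0$, and to read off existence, uniqueness, and conservation of energy directly from the spectral decomposition furnished by Proposition \ref{rr}. First I would recall that, by Proposition \ref{rr}, $\mathcal{A}$ is positive and self-adjoint with compact resolvent, so that $i\mathcal{A}$ generates a strongly continuous unitary group $(e^{it\mathcal{A}})_{t\in\R}$ on $L^2(0,\ell)$ by Stone's theorem; more concretely, since $(\Phi_n)_{n\in\N^*}$ is an orthonormal basis of eigenfunctions with real eigenvalues $\lambda_n$, the group acts diagonally by $e^{it\mathcal{A}}\Phi_n = e^{i\lambda_n t}\Phi_n$. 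Expanding the initial datum as $z_0 = \sum_{n\in\N^*} c_n \Phi_n$ with $c_n = \langle z_0, \Phi_n\rangle$, one simply defines $z(t,x)$ by the series \eqref{bihar9}.

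Next I would verify that this series indeed defines an element of $C([0,T],\mathcal{H}_\theta)$ when $z_0\in\mathcal{H}_\theta$. By the spectral characterization \eqref{bihar8}, membership in $\mathcal{H}_\theta$ amounts to $\sum_n |\lambda_n|^{2\theta}|c_n|^2 < \infty$ (with the usual care that only finitely many $\lambda_n$ may be zero or negative, so $|\lambda_n|^{2\theta}$ is well-defined for the relevant tail and the finitely many exceptional terms contribute a bounded quantity). For each $t$ the partial sums $\sum_{n\le N} c_n e^{i\lambda_n t}\Phi_n$ form a Cauchy sequence in $\mathcal{H}_\theta$ uniformly in $t\in[0,T]$, because $\big\|\sum_{M< n\le N} c_n e^{i\lambda_n t}\Phi_n\big\|_\theta^2 = \sum_{M< n\le N}|\lambda_n|^{2\theta}|c_n|^2$ is independent of $t$ and tends to $0$ as $M,N\to\infty$; each partial sum is continuous in $t$ (a finite sum of exponentials), so the uniform limit $z$ lies in $C([0,T],\mathcal{H}_\theta)$. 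That $z$ solves \eqref{biharr22} follows term by term, and uniqueness follows by projecting any solution onto each $\Phi_n$: the Fourier coefficient $c_n(t) := \langle z(t),\Phi_n\rangle$ must satisfy $i c_n'(t) = \lambda_n c_n(t)$ with $c_n(0) = c_n$, forcing $c_n(t) = c_n e^{i\lambda_n t}$, which is exactly \eqref{bihar9}.

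Finally, conservation of energy is the computation already displayed just before the statement: since $|c_n e^{i\lambda_n t}| = |c_n|$ for all $n$ (because $\lambda_n\in\R$), one has
\[
\mathcal{E}_\theta(t) = \|z(t)\|_\theta^2 = \sum_{n\in\N^*} |\lambda_n|^{2\theta}|c_n e^{i\lambda_n t}|^2 = \sum_{n\in\N^*} |\lambda_n|^{2\theta}|c_n|^2 = \mathcal{E}_\theta(0),
\]
so the $\mathcal{H}_\theta$-norm of the solution is constant in time. I do not anticipate a genuine obstacle here — the only point requiring a word of care is the handling of the finitely many non-positive eigenvalues in the weight $|\lambda_n|^{2\theta}$ for $\theta<0$, and the fact that those finitely many terms are harmless because any finite-dimensional piece is trivially continuous and norm-preserving under the diagonal flow. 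The essentially routine nature of the argument is why the well-posedness is stated as a lemma rather than a theorem.
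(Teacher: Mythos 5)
Your proposal is correct and follows essentially the same route as the paper, which simply writes the solution as the eigenfunction series \eqref{bihar9} and verifies conservation of $\mathcal{E}_\theta$ by the same term-by-term computation; your additional details (uniform-in-$t$ Cauchy estimate in $\mathcal{H}_\theta$, uniqueness by projecting onto each $\Phi_n$, care with the finitely many non-positive eigenvalues) are just the routine filling-in the paper leaves implicit.
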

\begin{proposition}\label{ph1}
{ Assume that $\gamma\not\in\Gamma^*$, where $\Gamma^*$ is given by relation  \eqref{set1}.} Let $T
>0$ and $z_0\in H^1_0(0,\ell)$, then
\begin{equation}\label{biharr23}
C_T^{-1} \|z_0\|_{H^1_0(0,\ell)}^{2}\leq\int_{0}^{T}|\partial_xz(t,0)|^{2}dt\leq C_T \|z_0\|_{H^1_0(0,\ell)}^{2},
\end{equation}
for some positive constant $C_T>0$, depending on $T,$ where $z$ is the solution of the adjoint system \eqref{biharr22}.
\end{proposition}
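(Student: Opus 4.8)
The strategy is to expand the solution of the adjoint system \eqref{biharr22} in the eigenbasis, to reduce the boundary term $\partial_x z(t,0)$ to a nonharmonic Fourier series, and then to apply an Ingham-type inequality. By Lemma \ref{pos}, the solution of \eqref{biharr22} with $z_0\in H^1_0(0,\ell)$ is $z(t,x)=\suml_{n\in\N^*}c_ne^{i\la_nt}\Phi_n(x)$ as in \eqref{bihar9}, with $\(c_n\)\in\ell^2(\N^*)$. Differentiating in $x$ and taking the trace at $x=0$ gives
$$\partial_x z(t,0)=\suml_{n\in\N^*}c_n\,\Phi_n'(0)\,e^{i\la_nt}.$$
The term-by-term differentiation and the validity of this trace in $L^2(0,T)$ are justified by $z_0\in\mathcal H_{1/4}=H^1_0(0,\ell)$ together with $|\Phi_n'(0)|^2\lesssim 1+|\la_n|^{1/2}$ (from Corollary \ref{SP2}(b) for large $n$, and boundedness for the finitely many remaining indices), which makes $\(c_n\Phi_n'(0)\)_{n\in\N^*}\in\ell^2(\N^*)$. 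Thus \eqref{biharr23} is equivalent to the two-sided estimate $\intl_0^T\bigl|\suml_{n\in\N^*}c_n\Phi_n'(0)e^{i\la_nt}\bigr|^2dt\asymp\|z_0\|_{H^1_0(0,\ell)}^2$.

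First I would establish a uniform spectral gap for the whole sequence $(\la_n)_{n\in\N^*}$. Since $\gamma\not\in\Gamma^*$, Theorem \ref{Lem2} ensures that every eigenvalue $\la_n$ is algebraically simple; there are only finitely many indices with $n<n_0$, and by Corollary \ref{SP2}(a) the gaps satisfy $\la_{n+1}-\la_n>C$ for all large $n$. Hence $\ga_0:=\inf_{n\in\N^*}\(\la_{n+1}-\la_n\)>0$, and moreover, since $\la_n\sim n^4\pi^4/\ell^4$ by \eqref{gb}, one has $\la_{n+1}-\la_n\to+\infty$. Applying Ingham's inequality to the exponential family $\{e^{i\la_nt}\}_{n\in\N^*}$ --- in the refined form valid for \emph{every} $T>0$ when the gaps tend to infinity --- one obtains positive constants $c_T,C_T$ such that
$$c_T\suml_{n\in\N^*}|c_n|^2|\Phi_n'(0)|^2\ \leq\ \intl_0^T|\partial_x z(t,0)|^2dt\ \leq\ C_T\suml_{n\in\N^*}|c_n|^2|\Phi_n'(0)|^2.$$

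It then remains to identify $\suml_{n\in\N^*}|c_n|^2|\Phi_n'(0)|^2$ with $\|z_0\|_{H^1_0(0,\ell)}^2$. By Corollary \ref{SP2}(b) one has $|\Phi_n'(0)|^2/\sqrt{\la_n}\to 2/\ell$, so $|\Phi_n'(0)|^2\asymp\(1+|\la_n|\)^{1/2}$ for large $n$; for the finitely many small-index eigenvalues the explicit eigenfunctions from \eqref{hgm}, \eqref{dog1} and \eqref{dog}, each a single mode $\Phi_n(x)=\sin(\omega_n x)$ with $\omega_n>0$, give $\Phi_n'(0)=\omega_n\neq0$, so there $|\Phi_n'(0)|^2$ is bounded above and below by positive constants. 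Combining the two regimes, $c_1\(1+|\la_n|\)^{1/2}\leq|\Phi_n'(0)|^2\leq c_2\(1+|\la_n|\)^{1/2}$ uniformly in $n$, whence, using the spectral description \eqref{bihar8}--\eqref{ahme} and the Poincar\'e inequality,
$$\suml_{n\in\N^*}|c_n|^2|\Phi_n'(0)|^2\ \asymp\ \suml_{n\in\N^*}\(1+|\la_n|\)^{1/2}|c_n|^2\ =\ \|z_0\|_{L^2(0,\ell)}^2+\|z_0\|_{\mathcal H_{1/4}}^2\ \asymp\ \|z_0\|_{H^1_0(0,\ell)}^2.$$
Inserting this into the previous estimate yields \eqref{biharr23}.

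The main difficulty is the Ingham step: one must check that the uniform gap $\ga_0>0$ holds over the \emph{entire} sequence $(\la_n)_{n\in\N^*}$ --- in particular that the finitely many negative (and possibly zero) eigenvalues neither coincide with one another nor with the tail of the positive spectrum, which is precisely where the assumption $\gamma\not\in\Gamma^*$ enters --- and then invoke the version of Ingham's theorem that applies for all $T>0$, available here because $\la_{n+1}-\la_n\to+\infty$. A secondary technical point is to control the size of $\Phi_n'(0)$ near a possible zero eigenvalue, so that no Fourier mode is lost when passing between $\suml_{n\in\N^*}|c_n|^2|\Phi_n'(0)|^2$ and $\|z_0\|_{H^1_0(0,\ell)}^2$.
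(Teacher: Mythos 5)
Your proof is correct and follows essentially the same route as the paper: expand the solution in the eigenbasis, apply a nonharmonic Fourier inequality valid for every $T>0$ thanks to the divergent spectral gap (the simplicity of all eigenvalues for $\gamma\notin\Gamma^*$ being exactly what makes the exponents distinct), and then convert $\sum_{n}|c_n\Phi_n'(0)|^2$ into $\|z_0\|_{H^1_0(0,\ell)}^2$ via Corollary \ref{SP2} and the spectral characterization \eqref{bihar8}--\eqref{ahme}. The only cosmetic differences are that the paper invokes Beurling's theorem with upper density $D^+(\lambda_n)=0$ where you invoke the refined Ingham theorem for gaps tending to infinity (interchangeable tools here), and your weight $(1+|\lambda_n|)^{1/2}$ handles the finitely many small eigenvalues, including a possible zero eigenvalue, slightly more carefully than the paper's $\sqrt{|\lambda_n|}$.
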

\begin{proof}
 From \eqref{bihar9}, we have \be
\int_{0}^{T}|\partial_xz(t,0)|^{2}dt =
\int_{0}^{T}\Big{|}\sum\limits_{n\in \N^*}
c_{n}e^{i{\la}_n t} \Phi_n'\(0\)\Big{|}^{2}dt,~\forall T>0. \label{biharr24}\ee Since $\gamma\in\Gamma^*$, then by Theorem
\ref{Lem2} and the gap condition \eqref{biharr4}, Beurling's Lemma (e.g., \cite{Ammari20225})
states that, for any $T>2\pi D^+\(\la_n\)$, the family $\(e^{i\lambda_n
t}\)_{n\in\N^*}$ forms a Riesz basis of $L^2(0, T)$, where $ D^+\(\la_n\):= \displaystyle \lim_{r\to\infty}
\frac{n^+\(r,\lambda_n\)}{r}$ is the Beurling upper density of the sequence $(\lambda_{n})_{n\in\N^*},$ and $n^+\(r ,\lambda_n\)$  denotes the maximum number of terms of the sequence $(\lambda_{n})_{n\in\N^*}$ contained
in an interval of length $r$. Therefore, from relation \eqref{biharr24}, we deduce that
for every $T>2\pi D^+\(\la_n\),$ there exists a positive constant $C_T>0$, depending on $T$, such that \be C_T^{-1} \sum\limits_{n\in
\N^*}\left|c_n \Phi'_n(0)\right|^{2}\leq\int_{0}^{T}|\partial_xz(t,0)|^{2}dt\leq C_T \sum\limits_{n\in
\N^*}\left|c_n \Phi'_n(0)\right|^{2}.\label{biharr25}\ee
By relations  \eqref{gb} and \eqref{hgm}, we find that $ D^+\(\la_n\)=0.$ It is easy to check from relations \eqref{hgm}-\eqref{dog1} and \eqref{dog}, that the
eigenfunctions $\(\Phi_n\)_{n\in\N^*}$ satisfy \be
\Phi_n'(\ell)\not=0,~~\forall n\in\N^*. \label{biharr2}\ee
 Moreover, using the second statement of Corollary \ref{SP2}, one has  $$
C^{-1}\sqrt{\la_n}\leq\left|\Phi'_n(0)\right|^2\leq C\sqrt{\la_n},~\hbox{ as } n\to\infty,$$
for some positive constant $C> 0$.  Therefore, by relation \eqref{biharr25}, for any $T>0$, one gets
\be C_T^{-1} \sum\limits_{n\in
\N^*}\sqrt{\left|\lambda_n\right|}\left|c_n\right|^{2} \leq\int_{0}^{T}|\partial_xz(t,0)|^{2}dt\leq C_T \sum\limits_{n\in
\N^*}\sqrt{\left|\lambda_n\right|}\left|c_n\right|^{2},\label{gd}\ee
for some new constant $C_T>0$. Thus, by relations \eqref{bihar8} and \eqref{ahme}, we get \eqref{biharr23}.
\end{proof}
\section{Proof of Theorem \ref{hj}}
In this section, we prove our main controllability result, which is given by Theorem \ref{hj}.
Let us first mention that the control problem  \eqref{bihar2} is well
posed in the sense of "transposition", see \cite{J.L2}. More precisely, we have:
\begin{proposition} \label{b-p} Let
$T>0$, and  $f\in L^{2}(0,T)$. Then for any $y_0\in H^{-1}(0,\ell)$, there exists a unique weak solution  of
System \eqref{bihar2} in the sense of transposition, satisfying
\be y\in C\([0, T ]; H^{-1}(0,\ell)\).\label{biharr43}\ee
Moreover, there exists a constant $C(T)>0$ such that
\be \|y\|_{L^\infty\([0, T ]; H^{-1}(0,\ell)\)}\leq
C(T)\(\|y_0\|_{H^{-1}(0,\ell)}+\|f\|_{L^{2}(0,T)}\).\label{biharr44}\ee\end{proposition}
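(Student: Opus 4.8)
The plan is to construct $y$ by the transposition (duality) method, in the spirit of Lions \cite{J.L2}, the two basic ingredients being the well-posedness of the homogeneous adjoint system \eqref{biharr22} and the hidden regularity of its normal trace. Fix $t\in[0,T]$ and $\psi\in H^1_0(0,\ell)$, and let $\varphi$ be the solution on $(0,t)$ of the backward adjoint problem $i\partial_s\varphi=-\partial_x^4\varphi+\gamma\partial_x^2\varphi$ in $(0,t)\times(0,\ell)$, with hinged conditions $\varphi(s,0)=\partial_x^2\varphi(s,0)=\varphi(s,\ell)=\partial_x^2\varphi(s,\ell)=0$ and final datum $\varphi(t,\cdot)=\psi$. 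By Lemma \ref{pos} with $\theta=1/4$ and the identification $\mathcal{H}_{1/4}=H^1_0(0,\ell)$ from \eqref{ahme}, this problem has a unique solution $\varphi\in C([0,t];H^1_0(0,\ell))$ with $\|\varphi(s)\|_{H^1_0(0,\ell)}=\|\psi\|_{H^1_0(0,\ell)}$ for every $s$. A formal computation — multiply \eqref{bihar2} by $\overline{\varphi}$, integrate over $(0,t)\times(0,\ell)$, integrate by parts once in $s$ and four times in $x$ — makes all boundary contributions vanish thanks to the hinged conditions, except the one coming from $\partial_x^2y(s,0)=f(s)$ paired against $\overline{\partial_x\varphi(s,0)}$, while the interior terms cancel because $\varphi$ solves the same equation; this yields the identity
$$\langle y(t),\psi\rangle_{H^{-1},H^1_0}=\langle y_0,\varphi(0)\rangle_{H^{-1},H^1_0}+i\int_0^t f(s)\,\overline{\partial_x\varphi(s,0)}\,ds ,$$
where both brackets extend the $L^2(0,\ell)$ pairing. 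I would then take this identity as the \emph{definition} of the weak solution $y$.

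The right-hand side of this identity is a bounded antilinear functional of $\psi\in H^1_0(0,\ell)$: its first term is bounded by $\|y_0\|_{H^{-1}(0,\ell)}\|\varphi(0)\|_{H^1_0(0,\ell)}=\|y_0\|_{H^{-1}(0,\ell)}\|\psi\|_{H^1_0(0,\ell)}$, and its second by $\|f\|_{L^2(0,T)}\|\partial_x\varphi(\cdot,0)\|_{L^2(0,t)}$. For the trace factor I would invoke the direct (admissibility) inequality $\|\partial_x\varphi(\cdot,0)\|_{L^2(0,t)}\le C_T\|\psi\|_{H^1_0(0,\ell)}$, which is the upper bound already contained in Proposition \ref{ph1}; this half of the estimate in fact holds for \emph{every} $\gamma<0$, since after grouping the finitely many repeated eigenvalues the distinct frequencies keep the uniform gap \eqref{biharr4} for large indices and have zero Beurling density, so $\bigl(e^{i\lambda_n t}\bigr)$ is a Riesz sequence in $L^2(0,T)$ for every $T>0$ (which gives the Bessel part), while $|\Phi_n'(0)|^2\le C\sqrt{|\lambda_n|}$ for $n$ large by Corollary \ref{SP2}, the finitely many remaining modes being absorbed into the constant. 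Riesz representation then provides, for each $t\in[0,T]$, a unique $y(t)\in H^{-1}(0,\ell)$ with $\|y(t)\|_{H^{-1}(0,\ell)}\le C(T)\bigl(\|y_0\|_{H^{-1}(0,\ell)}+\|f\|_{L^2(0,T)}\bigr)$, and taking the supremum over $t$ gives \eqref{biharr44}.

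Uniqueness is immediate, because two transposition solutions obey the same identity for every $\psi$ in the dense set $H^1_0(0,\ell)$. For the time-continuity \eqref{biharr43}, I would first observe that $t\mapsto\langle y(t),\psi\rangle$ is continuous for smooth $\psi$ (for such $\psi$, both $\varphi(0)$ and $\partial_x\varphi(\cdot,0)$ vary continuously with the final time $t$), so $y\in C_w([0,T];H^{-1}(0,\ell))$, and then upgrade weak to strong continuity by density: for smooth, compatible data $(y_0,f)$ the solution decomposes as the free evolution $e^{it\mathcal{A}}y_0$ plus a boundary-lifting term, is genuinely continuous in time, and the uniform bound \eqref{biharr44} allows one to pass to the limit. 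I expect the only real obstacle to be the hidden-regularity trace estimate $\partial_x\varphi(\cdot,0)\in L^2(0,T)$ with the correct dependence on $\|\psi\|_{H^1_0}$, together with the careful bookkeeping in the integration by parts that isolates $\partial_x\varphi(\cdot,0)$ as the boundary datum dual to the control $f$; once these are in place, the continuity-in-time step is routine.
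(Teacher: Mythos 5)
Your transposition argument is sound and is essentially the proof the paper delegates to its references: the paper gives no argument for Proposition \ref{b-p} beyond citing the clamped-boundary analogues in \cite{Ammari20225,Zheng} and noting that the proof adapts to hinged conditions with control through $\partial_x^2y(t,0)$. Your ingredients --- the duality identity isolating $f(s)\,\overline{\partial_x\varphi(s,0)}$, the conserved $H^1_0$-energy of the adjoint flow via Lemma \ref{pos} and \eqref{ahme}, the direct (admissibility) trace estimate, Riesz representation for each fixed $t$, and the density upgrade from weak to strong continuity in time --- are exactly the standard steps of that adaptation. One point where you go beyond the paper's text, and rightly so: Proposition \ref{ph1} is stated only for $\gamma\not\in\Gamma^*$, whereas well-posedness must hold for every $\gamma<0$; your remark that the upper (Bessel) half of the trace estimate survives repeated eigenvalues after grouping the doubled modes, using $|\Phi_n'(0)|^2\leq C\sqrt{|\lambda_n|}$ for large $n$ and absorbing the finitely many low modes (including a possible zero or double eigenvalue) into the constant, is precisely the observation needed to make the statement valid for all $\gamma<0$.
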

For the proof of this Proposition, the reader is referred to
\cite{Ammari20225,Zheng}. In that papers, the result is proven for the linear fourth-order Schr\"odinger equation with clamped boundary conditions and boundary control acts on the first spatial derivative at one endpoint, but the proof can easily be adapted to our setting.
We are now ready to prove Theorem \ref{hj}.
\begin{proof}[Proof of Theorem \ref{hj}]
Since, System \eqref{bihar2} is linear
and reversible in time, then, by applying the {Lions' {\rm HUM}} \cite{J.L2}, the control problem is reduced to the obtention of suitable observability
inequalities for the adjoint system \eqref{biharr22}, that is, \be C_T^{-1} \|z_0\|_{H^1_0(0,\ell)}^{2}\leq\int_{0}^{T}|\partial_xz(t,0)|^{2}dt\leq C_T \|z_0\|_{H^1_0(0,\ell)}^{2}.\label{biharr25vxsm}\ee
for some positive constant $C_T>0$, depending on $T,$ where $z$ is the solution of the adjoint system \eqref{biharr22}. Let  $\gamma\not\in\Gamma^*,$ where $\Gamma^*$ is given by relation  \eqref{set1}. As a direct application of Proposition \ref{ph1}, both inequalities in \eqref{biharr25vxsm} hold for all $T>0.$  This implies that System \eqref{bihar2} is exactly controllable in time $T>0$ for any initial data $y_0\in H^{-1}(0,\ell)$. Now, let $\gamma\in\Gamma^*$ and let $z$ be the solution of the adjoint system \eqref{biharr22} with initial data $$z_0:=\Phi_{n}(x)=\sqrt{\frac{\ell}{2}}{\phi(x,\lambda_n)},~ \hbox{ for some } n\leq n_0=\[\ell{\pi}^{-1}{\sqrt{|\gamma|}}
\]-1,$$
where the eigenfunctions $\phi(x,\lambda_n)$ are given by \eqref{gdv}. By Theorem \ref{Lem2}, the eigenvalues $\(\lambda_n\)_{n\leq n_0}$ of Problem \eqref{biharr1} are of multiplicity two, and this implies that, $\partial_\lambda\phi(x,\lambda)_{|_{\lambda_n}}=0,$  for each fixed $x\in\(0,\ell\)$. Hence, by relations \eqref{bihar9}, \eqref{7} and the orthogonality properties of the eigenfunctions, we deduce that the solution $z$ of the adjoint system \eqref{biharr22} has the form $$
{z}(t,x)=\sqrt{\frac{\ell}{2}}{e^{i\la_nt}}\(\bar\xi(\lambda_n)\sin(x\xi(\lambda_n))-\xi(\lambda_n)\sin(x\bar\xi(\lambda_n))\), \hbox{ for some } n\leq n_0,$$
where $\xi(\lambda)$ and $\bar\xi(\lambda)$ are given by \eqref{drog}. Therefore, $\partial_x{z}(t,0)=0$. Consequently, the right hand side of the first inequality in \eqref{biharr25vxsm} is zero, while, the left hand side
is not zero. Thus, the  first inequality in \eqref{biharr25vxsm} cannot be valid. The proof is then complete.
\end{proof}


\end{document}